\newcommand{\sca}[1]{\left\langle#1\right\rangle} 
\newcommand{\abs}[1]{\lvert#1\rvert}
\newtheorem{theorem}{Theorem}[section]
\newtheorem{lemma}[theorem]{Lemma}
\newtheorem{proposition}[theorem]{Proposition}
\newtheorem{corollary}[theorem]{Corollary}
\theoremstyle{definition}
\newtheorem{definition}[theorem]{Definition}
\theoremstyle{remark}
\newtheorem*{acknowledgment}{Acknowledgments}
\DeclareMathOperator{\E}{\mathbb{E}} 
\numberwithin{equation}{section}
\begin{document}

\title{Random matrices and controllability of dynamical systems}

\author{John Leventides}
\address{Department of Economics, Faculty of Economics and Political Sciences, National and Kapodistrian University of Athens.}
\email{ylevent@econ.uoa.gr}

\author{Nick Poulios}
\address{Department of Economics, Faculty of Economics and Political Sciences, National and Kapodistrian University of Athens.}
\email{npoulios@econ.uoa.gr}

\author{Costas Poulios}
\address{Department of Economics, Faculty of Economics and Political Sciences, National and Kapodistrian University of Athens.}
\email{konpou@econ.uoa.gr}

\subjclass[2010]{93B05, 37H99, 37N35, 60B20}
\keywords{Controllability of dynamical systems, random matrices, random systems.\\
Institute of Mathematics and Its Applications (IMA) U.K., 2020, accepted}

\date{}

\begin{abstract}
We introduce the concept of $\epsilon$-uncontrollability for random linear systems, i.e. linear system in which the usual matrices have been replaced by random matrices. We also estimate the $\epsilon$-uncontrollability in the case where the matrices come from the Gaussian orthogonal ensemble. Our proof utilizes tools from systems theory, probability theory and convex geometry.
\end{abstract}

\maketitle

\section{Introduction}\label{sec.introduction}
Controllability is one of the most fundamental concepts in systems theory and control theory. Roughly speaking, a system is controllable if one can switch from one trajectory to the other provided that the laws governing the system are obeyed and some delay is allowed. In the present work, we focus on the linear, time-invariant multivariable system described by the following equations (see \cite{kark}, \cite{polder}):
\begin{equation}\label{eq.system}
  \frac{dx}{dt} = \mathbf{Ax}+ \mathbf{bu}, \quad \mathbf{y}=\mathbf{Cx}+\mathbf{du},
\end{equation}
where $\mathbf{x}=x(t)=(x_1(t), x_2(t), \ldots, x_n(t))^t\in \mathbb{R}^n$ is a vector describing the state of the system at time $t$, $\mathbf{u}=u(t)\in\mathbb{R}$ is the input and $\mathbf{y}=y(t)$ is the $m$-vector of outputs. $\mathbf{A}$, $\mathbf{C}$ are respectively $n\times n$, $m\times n$ matrices, and $\mathbf{b}\in \mathbb{R}^n$, $\mathbf{d}\in \mathbb{R}^m$ are vectors. In this case the definition of controllability goes as follows. The system \eqref{eq.system} is said to be \emph{state controllable} or simply \emph{controllable}, if there exists a finite time $T>0$, such that for any initial state $x(0)\in\mathbb{R}^n$ and any $x_1\in\mathbb{R}^n$, there is an input $\mathbf{u}=u(t)$ defined on $[0,T]$ that will transfer $x(0)$ to $x_1$ at time $T$ (i.e. $x(t)$ obeys the first equation of \eqref{eq.system} and $x(T)=x_1$). Otherwise, the system \eqref{eq.system} is called \emph{uncontrollable}.

In this article, we consider random systems of the form \eqref{eq.system}, that are systems where the parameters $\mathbf{A}$ and $\mathbf{b}$ have been replaced with random matrices. Given a positive number $\epsilon$, we define the concept of $\epsilon$-uncontrollability of a random system. It is natural that the $\epsilon$-uncontrollability of a random system depends on the distribution of the entries of $\mathbf{A}$ and $\mathbf{b}$. Consequently, we consider the fundamental Gaussian orthogonal ensemble of random matrices and we calculate the $\epsilon$-uncontrollability in this particular case.

The rest of the paper is organised as follows. In Section \ref{sec.uncontrol}, we define the $\epsilon$-uncontrollability for random systems. In Section \ref{sec.Gaussian}, we describe the Gaussian orthogonal ensemble, which is going to be used in this work, and we state some known results about this ensemble that will be used in the sequel. In Section \ref{sec.n=2}, we consider the case $n=2$, i.e. when the state space is $\mathbb{R}^2$, and we give the detailed calculation of the $\epsilon$-uncontrollability of a random system where the matrix $\mathbf{A}$ comes from the Gaussian orthogonal ensemble. Finally, in Section \ref{sec.general}, we deal with the general case of $\mathbb{R}^n$ for $n>2$. In this case, the situation is more complicated and we provide an upper bound for the $\epsilon$-uncontrollability of a random system.

\section{$\epsilon$-uncontrollability of random systems}\label{sec.uncontrol}
In order to formulate a suitable concept of uncontrollability for random systems, we utilize a characterisation of the controllability of systems of the form \eqref{eq.system}. This is provided in the next theorem (for more details see, for example, \cite{kark} Theorem 2.2).

\begin{theorem}
  The following are equivalent.
  \begin{enumerate}
    \item The system \eqref{eq.system} is controllable.
    \item The matrix $[s\mathbf{I-A}, \mathbf{-b}]$ has full rank (i.e. rank $n$), for every $s\in \mathbb{C}$.
  \end{enumerate}
\end{theorem}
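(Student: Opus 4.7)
The plan is to reduce the statement to the Kalman rank criterion: the system \eqref{eq.system} is controllable if and only if the controllability matrix $\mathcal{C}=[\mathbf{b},\mathbf{A}\mathbf{b},\ldots,\mathbf{A}^{n-1}\mathbf{b}]$ has rank $n$. I would establish this equivalence first by expressing the state at time $T$ through the variation of constants formula $x(T)=e^{\mathbf{A}T}x(0)+\int_{0}^{T}e^{\mathbf{A}(T-s)}\mathbf{b}u(s)\,ds$ and observing that the image of the input-to-state map coincides with the column span of $\mathcal{C}$ (using that $e^{\mathbf{A}(T-s)}$ is, by Cayley--Hamilton, a polynomial in $\mathbf{A}$ of degree at most $n-1$, so the reachable set lives in $\mathrm{Im}\,\mathcal{C}$, and conversely one can hit every element of $\mathrm{Im}\,\mathcal{C}$ using the non-singularity of the associated controllability Gramian on that subspace).

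For (1)$\Rightarrow$(2), I would proceed by contrapositive. If $[s_{0}\mathbf{I}-\mathbf{A},-\mathbf{b}]$ has rank less than $n$ for some $s_{0}\in\mathbb{C}$, there is a nonzero row vector $v^{*}$ with $v^{*}(s_{0}\mathbf{I}-\mathbf{A})=0$ and $v^{*}\mathbf{b}=0$; that is, $v^{*}$ is a left eigenvector of $\mathbf{A}$ satisfying $v^{*}\mathbf{b}=0$. A one-line induction gives $v^{*}\mathbf{A}^{k}\mathbf{b}=s_{0}^{k}v^{*}\mathbf{b}=0$ for every $k\geq 0$, so $v^{*}\mathcal{C}=0$ and the Kalman rank condition fails.

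For (2)$\Rightarrow$(1), I would again argue by contrapositive. If $\mathrm{rank}\,\mathcal{C}<n$, pick a nonzero row vector $v^{*}$ with $v^{*}\mathbf{A}^{k}\mathbf{b}=0$ for $k=0,\ldots,n-1$; Cayley--Hamilton extends this to all $k\geq 0$. The set $V=\{w^{*}:w^{*}\mathbf{A}^{k}\mathbf{b}=0\ \text{for all}\ k\geq 0\}$ is then a nonzero subspace that is invariant under right multiplication by $\mathbf{A}$. Passing to $\mathbb{C}$, $V$ contains an eigenvector of $\mathbf{A}$ acting on the right: some $w^{*}\neq 0$ with $w^{*}\mathbf{A}=s_{0}w^{*}$, and by construction $w^{*}\mathbf{b}=0$. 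Hence $w^{*}[s_{0}\mathbf{I}-\mathbf{A},-\mathbf{b}]=0$, contradicting (2).

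The main obstacle lies in the (2)$\Rightarrow$(1) direction: it requires both Cayley--Hamilton, to propagate the annihilation condition from $k\leq n-1$ to all $k$, and the existence of an eigenvector inside a nontrivial $\mathbf{A}$-invariant subspace, which forces one to work over $\mathbb{C}$ even though $\mathbf{A}$ and $\mathbf{b}$ are real. The reduction to the Kalman criterion and the (1)$\Rightarrow$(2) implication are essentially routine manipulations with the variation of constants formula and with powers of $\mathbf{A}$.
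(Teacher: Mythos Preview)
Your proof is correct and follows the standard route through the Kalman rank criterion; the two contrapositive arguments via left eigenvectors are the usual ones, and your use of Cayley--Hamilton to extend the annihilation from $k\le n-1$ to all $k\ge 0$, together with the extraction of an eigenvector from a nontrivial $\mathbf{A}$-invariant subspace over $\mathbb{C}$, is exactly what is needed.

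Note, however, that the paper does not actually prove this theorem: it is stated as a known characterisation and referred to \cite{kark}, Theorem 2.2. So there is no ``paper's own proof'' to compare against. Your argument is a complete and self-contained proof of a result the authors simply quote.
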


Additionally, we need the next lemma whose proof is based on elementary linear algebra and thus it is omitted.

\begin{lemma}
  Let $\mathbf{A}$ be an $n\times n$-matrix and $\mathbf{b}\in \mathbb{R}^n$. Then, the following are equivalent.
  \begin{enumerate}
    \item The matrix $[s\mathbf{I-A}, \mathbf{-b}]$ has full rank (i.e. rank $n$), for every $s\in \mathbb{C}$.
    \item There is no eigenvector $\mathbf{v}\in \mathbb{R}^n$ of the matrix $\mathbf{A}$ such that $\sca{\mathbf{v, b}}=0$.
  \end{enumerate}
\end{lemma}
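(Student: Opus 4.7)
The plan is to prove each direction by contrapositive, using the standard row-space observation that underlies the PBH (Popov--Belevitch--Hautus) test. The matrix $[s\mathbf{I}-\mathbf{A},-\mathbf{b}]$ is $n\times(n+1)$, so it fails to have full rank iff its $n$ rows are linearly dependent, iff there exists a nonzero (row) vector $\mathbf{w}^t$ satisfying
\[
\mathbf{w}^t(s\mathbf{I}-\mathbf{A})=0 \quad\text{and}\quad \mathbf{w}^t\mathbf{b}=0.
\]
The first condition says that $\mathbf{w}^t\mathbf{A}=s\mathbf{w}^t$, that is, $\mathbf{w}$ is a (left) eigenvector of $\mathbf{A}$ with eigenvalue $s$, and the second says $\sca{\mathbf{w},\mathbf{b}}=0$. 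Once this reformulation is in hand, both implications become almost tautological.

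For $(2)\Rightarrow(1)$, I would argue contrapositively: given a real eigenvector $\mathbf{v}$ with $\mathbf{A}\mathbf{v}=\lambda\mathbf{v}$ and $\sca{\mathbf{v},\mathbf{b}}=0$, I would use the fact (implicit in the paper's GOE setting, where $\mathbf{A}$ is symmetric) that a right eigenvector is also a left eigenvector, so $\mathbf{v}^t(\lambda\mathbf{I}-\mathbf{A})=0$ and $\mathbf{v}^t\mathbf{b}=0$. Thus $\mathbf{v}^t$ is a nonzero left null vector of $[\lambda\mathbf{I}-\mathbf{A},-\mathbf{b}]$, witnessing rank deficiency at the real (hence complex) value $s=\lambda$.

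For $(1)\Rightarrow(2)$, again contrapositively, if the rank drops at some $s_0\in\mathbb{C}$, the observation above produces $\mathbf{w}\in\mathbb{C}^n\setminus\{0\}$ with $\mathbf{w}^t\mathbf{A}=s_0\mathbf{w}^t$ and $\mathbf{w}^t\mathbf{b}=0$. For symmetric $\mathbf{A}$, $s_0$ must be real and $\mathbf{w}$ can be taken in $\mathbb{R}^n$ by replacing it with whichever of $\mathrm{Re}\,\mathbf{w},\mathrm{Im}\,\mathbf{w}$ is nonzero; this yields a real eigenvector of $\mathbf{A}$ orthogonal to $\mathbf{b}$, contradicting (2).

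The only genuine subtlety---presumably what the authors wave away as ``elementary linear algebra''---is the real-vs.-complex issue in the second implication: for a general real matrix $\mathbf{A}$, eigenvalues and eigenvectors may be non-real, and the lemma as literally stated with $\mathbf{v}\in\mathbb{R}^n$ requires either reading ``eigenvector'' in the complex sense or (as is the case throughout the remainder of the paper) restricting to matrices with real spectrum, such as those drawn from the Gaussian orthogonal ensemble. Modulo this point, the argument reduces to the single row-dependence observation above.
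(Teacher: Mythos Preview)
The paper omits the proof entirely (``based on elementary linear algebra and thus it is omitted''), so there is no approach to compare against; your PBH/left-null-vector argument is exactly the standard one the authors are presumably gesturing at, and it is correct for the symmetric matrices that the rest of the paper actually uses.

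One small slip: your two contrapositive paragraphs are labeled backwards. In the paragraph you call ``$(2)\Rightarrow(1)$'' you start from $\neg(2)$ (a real eigenvector $\mathbf{v}$ orthogonal to $\mathbf{b}$) and deduce $\neg(1)$ (rank deficiency at $s=\lambda$); that is the contrapositive of $(1)\Rightarrow(2)$. Likewise the paragraph you call ``$(1)\Rightarrow(2)$'' actually establishes $(2)\Rightarrow(1)$. Both directions are present and correct, just mislabeled.

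Your closing remark is on point and worth keeping: as literally stated (with $\mathbf{v}\in\mathbb{R}^n$ and no symmetry hypothesis on $\mathbf{A}$), the equivalence can fail---a real matrix may lose PBH rank at a non-real eigenvalue with no real eigenvector witnessing it, and even at a real eigenvalue the relevant object is a \emph{left} eigenvector. The lemma is clean precisely under the GOE assumption (symmetric $\mathbf{A}$, hence real spectrum and coinciding left/right eigenvectors) that governs the rest of the paper.
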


Motivated by the above results, we now define the $\epsilon$-uncontrollability for random networks.

\begin{definition}
  Assume that a random system is given:
   \begin{equation}\label{eq.random_system}
     \frac{dx}{dt} = \mathbf{A x} +\mathbf{b u},
   \end{equation}
   where $\mathbf{A}$ is an $n\times n$ random matrix and $\mathbf{b}$ is an $n$-dimensional random vector. Given a positive number $\epsilon$, the $\epsilon$-uncontrollability of the above system is defined to be the probability:
  $$P_\epsilon = \mathbb{P} \left( \abs{\sca{\mathbf{v,b}}}<\epsilon \,\, : \,\, \text{for some eigenvector } \mathbf{v} \text{ of } \mathbf{A} \text{ with } \|\mathbf{v}\|_2=1  \right).$$
\end{definition}

\section{Random matrix ensemble}\label{sec.Gaussian}
It is quite evident that the measure of $\epsilon$-uncontrollability of the random system \eqref{eq.random_system} depends on the distribution of the matrix $\mathbf{A}$ and the vector $\mathbf{b}$. In this article, we consider one important ensemble of real symmetric random matrices, namely the so-called \emph{Gaussian orthogonal ensemble (GOE)}. On account of its applications, GOE is one of the most studied random matrix ensembles. It is placed in the more general framework of Wigner matrices, which are defined as follows. We consider $\xi, \zeta$ real-valued random matrices with zero mean. Let $\mathbf{W}=\left(w_{ij}\right)_{i,j=1}^n$ be a random symmetric matrix. We call $\mathbf{W}$ a Wigner matrix if his entries satisfy the next conditions:
\begin{itemize}
  \item $\{w_{ij} \mid 1\le i\le j\le n\}$ are independent random variables;
  \item $\{w_{ij} \mid 1\le i <  j\le n\}$ are i.i.d. (independent, identically distributed) copies of $\xi$;
  \item $\{w_{ii} \mid i =1,\ldots, n\}$ are i.i.d. copies of $\zeta$.
\end{itemize}
The case of Wigner matrices in which $\xi$ and $\zeta$ are Gaussian with $\E[\xi^2]=1$ and $\E[\zeta^2]=2$ gives the Gaussian orthogonal ensemble. Hence, if the symmetric matrix $\mathbf{W}$ belongs to GOE, then $w_{ii} \sim N(0,2)$ (for all $i=1,\ldots,n$), $w_{ij} \sim N(0,1)$ (for all $1\le i<j\le n$) and the entries on and above the diagonal are independent random variables. (We write GOE$(n)$ when an emphasis on the dimension is necessary. However, in majority of cases the dimension will be clear from the context.)

Additionally, as far as the random vector $\mathbf{b}$ is concerned, we have to choose some ensemble. More specifically, we consider the ensemble $\mathbf{S}_b$ containing all random vectors $\mathbf{b}=(b_1, b_2, \ldots, b_n)$ such that $(b_i)_{i=1}^n$  are independent Gaussian random variables with zero mean and $\E[b_i^2]=1$. Furthermore, we assume that $(b_i)_{1\le i\le n}$ and $(w_{ij})_{1\le i \le j \le n}$ are all independent random variables.

For more information concerning Wigner matrices and the GOE we refer to \cite{ande}. For our purpose, we need a couple of result for the eigenstructure of the GOE, which are stated here without proof. First of all, it is known that a.s., the eigenvalues of a matrix $\mathbf{A}$ from GOE are all distinct (see \cite{ande}, Theorem 2.5.2). Let now $v_1, \ldots, v_n$ denote the eigenvectors corresponding to the (real) eigenvalues of $\mathbf{A}$, with their first non zero entry positive real. Then the following proposition holds (see \cite{ande}, Corollary 2.5.4).

\begin{proposition}\label{prop-distri-eigenvector}
  The collection $(v_1,v_2,\ldots, v_n)$ is independent of the eigenvalues. Each of the eigenvectors $v_1,v_2,\ldots, v_n$ is distributed uniformly on
  $$\mathbf{S}^{n-1}_+ = \{ \mathbf{x}= (x_1,x_2,\ldots, x_n) \mid x_i\in\mathbb{R}, \|\mathbf{x}\|_2=1, x_1>0\}.$$
\end{proposition}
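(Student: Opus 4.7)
The plan is to exploit the orthogonal invariance of the GOE distribution. First I would verify that, with respect to Lebesgue measure on the $n(n+1)/2$ independent entries of a symmetric matrix, the density of $\mathbf{A}\sim\text{GOE}(n)$ is proportional to $\exp(-\text{tr}(\mathbf{A}^2)/4)$: the normalisations $w_{ii}\sim N(0,2)$ and $w_{ij}\sim N(0,1)$ for $i<j$ are chosen precisely so that $\prod_i e^{-w_{ii}^2/4}\prod_{i<j}e^{-w_{ij}^2/2}=e^{-\text{tr}(\mathbf{A}^2)/4}$. Since $\text{tr}\bigl((\mathbf{O}^T\mathbf{A}\mathbf{O})^2\bigr)=\text{tr}(\mathbf{A}^2)$ and Lebesgue measure on symmetric matrices is invariant under $\mathbf{A}\mapsto\mathbf{O}^T\mathbf{A}\mathbf{O}$, this yields $\mathbf{O}^T\mathbf{A}\mathbf{O}\stackrel{d}{=}\mathbf{A}$ for every fixed $\mathbf{O}\in O(n)$.

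Next I would pass to the spectral decomposition. Since the eigenvalues of $\mathbf{A}$ are a.s. distinct (the result quoted above from \cite{ande}), we may write $\mathbf{A}=\mathbf{U}\mathbf{\Lambda}\mathbf{U}^T$ with eigenvalues arranged in increasing order on the diagonal of $\mathbf{\Lambda}$ and with each column of $\mathbf{U}$ normalised so that its first nonzero entry is positive; this gives a measurable bijection off a null set. Under $\mathbf{A}\mapsto\mathbf{O}^T\mathbf{A}\mathbf{O}$, the pair transforms to $(\mathbf{O}^T\mathbf{U}D,\mathbf{\Lambda})$, where $D=D(\mathbf{O},\mathbf{U})$ is the diagonal $\pm 1$-matrix that restores the sign convention on the columns of $\mathbf{O}^T\mathbf{U}$. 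Consequently, conditional on $\mathbf{\Lambda}$, the law of $\mathbf{U}$ is invariant under the twisted action $\mathbf{U}\mapsto \mathbf{O}^T\mathbf{U}D$ of $O(n)$. Because this action is transitive on the set of admissible sign-normalised orthogonal frames diagonalising a given $\mathbf{\Lambda}$, the unique invariant probability measure is the pushforward of Haar measure on $O(n)$ under the sign normalisation; in particular, this law does not depend on $\mathbf{\Lambda}$, proving the independence of the eigenvector frame and the spectrum.

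It remains to identify the marginal of each individual column $v_j$. The $j$th column of a Haar-distributed orthogonal matrix is uniform on $\mathbf{S}^{n-1}$; the sign normalisation is the reflection $v\mapsto -v$ on the hemisphere $\{x_1<0\}$ (the equator $\{x_1=0\}$ is a null set) and the identity on $\mathbf{S}^{n-1}_+$, so it pushes the uniform law on $\mathbf{S}^{n-1}$ to the uniform law on $\mathbf{S}^{n-1}_+$. The main obstacle I would expect is the careful bookkeeping of the finite sign/permutation group acting on spectral decompositions: one must verify measurability of the sign matrix $D(\mathbf{O},\mathbf{U})$ and check that the quotient of Haar measure by this discrete action genuinely produces the claimed distribution on sign-normalised frames. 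Granting these technicalities, Proposition \ref{prop-distri-eigenvector} follows.
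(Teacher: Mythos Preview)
The paper does not actually prove this proposition: it is quoted from \cite{ande} (Corollary~2.5.4) and explicitly ``stated here without proof.'' Your sketch is the standard orthogonal-invariance argument that underlies the proof in \cite{ande}: compute the GOE density as $\exp(-\mathrm{tr}(\mathbf{A}^2)/4)$, deduce invariance of the law under $\mathbf{A}\mapsto \mathbf{O}^T\mathbf{A}\mathbf{O}$, and conclude that the sign-normalised eigenvector frame is Haar on $O(n)/\{\pm 1\}^n$ independently of $\mathbf{\Lambda}$. The argument is correct, and the technical points you flag (measurability of the sign map, identifying the quotient with sign-normalised frames and checking the pushforward of Haar is the unique invariant probability) are genuine but routine; they are handled in \cite{ande} by working directly on $O(n)/\{\pm 1\}^n$ and invoking uniqueness of Haar measure on homogeneous spaces of compact groups.
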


\section{The case $n=2$}\label{sec.n=2}
This section is entirely devoted to the calculation of the $\epsilon$-uncontrollability of a random system of the form \eqref{eq.random_system} when the state space is $\mathbb{R}^2$ and $\mathbf{A, b}$ belong to GOE and $\mathbf{S}_b$ respectively. In order to achieve this goal, we firstly fix a vector $\mathbf{b}\in \mathbb{R}^2$ and we set
$$P_{\epsilon, b} = \mathbb{P}\left( \abs{\sca{\mathbf{v,b}}}<\epsilon \,\, : \,\, \text{for some eigenvector } v \text{ of } A, \|\mathbf{v}\|_2=1 \right).$$
Then, the following result holds.
\begin{theorem}\label{th.uncontrol-n=2}
  Let the random system \eqref{eq.random_system} be given, where the state space is $\mathbb{R}^2$ and $\mathbf{A}$ belongs to GOE($2$). Then, for every non zero vector $\mathbf{b}\in \mathbb{R}^2$, we have
  $$P_{\epsilon, \mathbf{b}} = \left\{
                        \begin{array}{ll}
                          \frac{4}{\pi} \cdot \arcsin \left(\frac{\epsilon}{\|\mathbf{b}\|_2}\right), & \hbox{if $\frac{\epsilon}{\|b\|_2}<\frac{\sqrt{2}}{2}$;} \\
                          1, & \hbox{if $\frac{\epsilon}{\|b\|_2} \ge \frac{\sqrt{2}}{2}$.}
                        \end{array}
                      \right.$$
\end{theorem}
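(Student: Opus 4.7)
The plan is to exploit the fact that a $2\times 2$ real symmetric matrix has two mutually orthogonal eigenvectors, so that the whole eigenframe is determined by a single angle. By the proposition recalled above, the eigenvector $v_1$ is uniformly distributed on the upper half-circle $\mathbf{S}^1_+$, which I parametrise as $v_1 = (\cos\theta, \sin\theta)$ with $\theta$ uniform on $(-\pi/2, \pi/2)$. The second eigenvector $v_2$ is then perpendicular to $v_1$. Writing $\mathbf{b} = \|\mathbf{b}\|_2 (\cos\phi, \sin\phi)$ for a fixed angle $\phi$, a direct trigonometric computation gives
$$\sca{v_1, \mathbf{b}} = \|\mathbf{b}\|_2 \cos(\theta-\phi), \qquad \sca{v_2, \mathbf{b}} = \pm \|\mathbf{b}\|_2 \sin(\theta-\phi).$$

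Setting $c = \epsilon/\|\mathbf{b}\|_2$ and $\alpha = \theta-\phi$, the event whose probability I want to compute is
$$E = \{|\cos\alpha| < c\} \cup \{|\sin\alpha| < c\}.$$
Since $\alpha$ is uniformly distributed on an interval of length $\pi$, and since both $|\cos|$ and $|\sin|$ are $\pi$-periodic, this probability equals $\frac{1}{\pi}$ times the Lebesgue measure of $E\cap[0,\pi)$. On $[0,\pi)$ one easily identifies $\{|\cos\alpha|<c\}$ as an arc of length $2\arcsin c$ centred at $\pi/2$, and $\{|\sin\alpha|<c\}$ as the union of two half-arcs near $0$ and near $\pi$, again of total length $2\arcsin c$.

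For the first branch of the formula, the key observation is that these two arcs are disjoint precisely when $\arcsin c < \pi/4$, i.e. $c < \sqrt{2}/2$; in that regime the union has measure $4\arcsin c$ and division by $\pi$ yields the stated value. For the second branch $c \geq \sqrt{2}/2$, randomness plays no role: since $\{v_1, v_2\}$ is an orthonormal basis of $\mathbb{R}^2$, Parseval's identity gives $\sca{v_1,\mathbf{b}}^2 + \sca{v_2,\mathbf{b}}^2 = \|\mathbf{b}\|_2^2$, so the smaller of the two magnitudes is automatically at most $\|\mathbf{b}\|_2/\sqrt{2} \le \epsilon$, making the event occur surely.

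I do not anticipate a genuine obstacle here; the reduction to a uniform distribution of a single angle is immediate from the proposition, and the rest is elementary trigonometry. The only small subtlety to double-check is the disjointness threshold $c = \sqrt{2}/2$, where the two arcs just meet and the formula $4\arcsin c/\pi$ gives exactly $1$, so the two branches of the theorem patch together continuously.
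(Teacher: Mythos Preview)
Your proof is correct and follows essentially the same route as the paper: parametrise the eigenframe by a single angle uniform on a half-circle, reduce the event to $\{|\cos\alpha|<c\}\cup\{|\sin\alpha|<c\}$, and measure the arcs, noting disjointness exactly when $c<\sqrt{2}/2$. The only cosmetic differences are that the paper introduces an explicit rotation $T$ sending $\mathbf{b}/\|\mathbf{b}\|_2$ to a coordinate vector (where you simply write $\mathbf{b}=\|\mathbf{b}\|_2(\cos\phi,\sin\phi)$ and compute directly), and the paper dismisses the case $c\ge\sqrt{2}/2$ with ``clearly'' whereas your Parseval argument makes this explicit.
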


\begin{proof}
Since the matrix $\mathbf{A}$ from GOE($2$) is symmetric, there is an orthonormal basis $\{\mathbf{v_1, v_2}\}$ of $\mathbb{R}^2$ consisting of eigenvectors of $\mathbf{A}$. Without loss of generality (replacing $\mathbf{v}_1$ with $\mathbf{-v}_1$ or changing the order of $\{\mathbf{v_1,v_2}\}$, if necessary), we may assume that the first coordinate of $\mathbf{v}_1$ is positive. Hence, we can write $\mathbf{v}_1 = (\cos \theta,\sin \theta)$ and $\mathbf{v}_2 = (-\sin\theta, \cos\theta)$, for some $\theta \in (-\frac{\pi}{2}, \frac{\pi}{2})$. Now, we have:
 \begin{align*}
   P_{\epsilon, \mathbf{b}} =& \mathbb{P}\left( \abs{\sca{\mathbf{v,b}}}<\epsilon \,\, : \,\, \text{for some eigenvector } \mathbf{v} \text{ of }\mathbf{A} \text{ with } \|v\|_2=1  \right)\\
   = & \mathbb{P}\left( \abs{\sca{\mathbf{v}_i,\mathbf{b}}}<\epsilon \,\, : \,\, \text{for some eigenvector } \mathbf{v}_i \text{ of }\mathbf{A}, i=1,2   \right).
 \end{align*}
The non zero vector $\mathbf{b}$ is written in the form $\mathbf{b}=\|b\|_2 \left( \frac{b_1}{\|\mathbf{b}\|_2}, \frac{b_2}{\|\mathbf{b}\|_2} \right)$. Let $T$ be the rotation through a suitable angle $\varphi$ such that $T  \left( \frac{b_1}{\|\mathbf{b}\|_2}, \frac{b_2}{\|\mathbf{b}\|_2} \right) =e_2 = (0,1) $. Then, $T$ is in orthogonal group $O(2)$ and hence,
$$\abs{\sca{\mathbf{v,b}}} = \abs{\sca{T(\mathbf{v}),T(\mathbf{b})}} = \|\mathbf{b}\|_2 \cdot \abs{\sca{T(\mathbf{v}),e_2}}$$
for any eigenvector $\mathbf{v}=v_i$, $i=1,2$ of $\mathbf{A}$. Thus,
\begin{align*}
  P_{\epsilon, \mathbf{b}} = & \mathbb{P}\left( \|\mathbf{b}\|_2 \cdot \abs{\sca{e_2,T(\mathbf{v}_i)}}<\epsilon \,\, : \,\, \text{for } i=1 \text{ or } i=2  \right) \\
  = & \mathbb{P}\left( \abs{\sca{e_2,T(\mathbf{v}_i)}}<\frac{\epsilon}{\|\mathbf{b}\|_2} \,\, : \,\, \text{for } i=1 \text{ or } i=2  \right).
\end{align*}
The rotation $T$ is given by the following matrix representation
$$T= \left[
\begin{array}{cc}
  \cos \varphi & -\sin\varphi \\
  \sin\varphi  & \cos \varphi
\end{array}
\right].$$
Therefore,
 $$T(\mathbf{v}_1) = (\cos(\varphi+\theta),\sin(\varphi+\theta)) \quad \text{ and } \quad T(\mathbf{v}_2) = (-\sin(\varphi+\theta),\cos(\varphi+\theta)),$$
and, consequently,
$$P_{\epsilon, \mathbf{b}} =  \mathbb{P}\left( \abs{\sin(\varphi+\theta)}<\frac{\epsilon}{\|\mathbf{b}\|_2} \,\, \text{ or } \,\,\abs{\cos(\varphi+\theta)} <\frac{\epsilon}{\|\mathbf{b}\|_2}  \right).$$
We have to distinguish two cases.
\begin{description}
  \item [Case I] If $\frac{\epsilon}{\|\mathbf{b}\|_2} \ge \frac{\sqrt{2}}{2}$, then, clearly, one has $P_{\epsilon, \mathbf{b}}=1$.
  \item [Case II] If $\frac{\epsilon}{\|\mathbf{b}\|_2} < \frac{\sqrt{2}}{2}$, then
  $$ P_{\epsilon, \mathbf{b}} = \mathbb{P}\left( \abs{\sin(\varphi+\theta)}<\frac{\epsilon}{\|\mathbf{b}\|_2} \right) + \mathbb{P}\left( \abs{\cos(\varphi+\theta)}<\frac{\epsilon}{\|\mathbf{b}\|_2}  \right),$$
where the equality follows from the disjointness of the two sets. Now, for the first summand, we observe that $\varphi+\theta$ belongs to a semicircle. Hence, the values of $\varphi+\theta$ for which we have $\abs{\sin(\varphi+\theta)}<\frac{\epsilon}{\|\mathbf{b}\|_2}$ belong to an arc or to the unions of two disjoint arcs whose length is $2\cdot \arcsin(\frac{\epsilon}{\|\mathbf{b}\|_2})$. It follows that $\theta$ belongs either to an arc or to the union of two disjoint arcs with total length $2\cdot \arcsin(\frac{\epsilon}{\|\mathbf{b}\|_2})$. By Proposition \ref{prop-distri-eigenvector}, $\theta$ is a random variable with the uniform distribution on the interval $(-\frac{\pi}{2},\frac{\pi}{2})$. Therefore,
$$\mathbb{P}\left( \abs{\sin(\varphi+\theta)}<\frac{\epsilon}{\|\mathbf{b}\|_2} \mid \theta\in (-\frac{\pi}{2}, \frac{\pi}{2}) \right) = \frac{2}{\pi} \cdot \arcsin(\frac{\epsilon}{\|\mathbf{b}\|_2}).$$
Using similar argumentation for the second summand, we finally obtain:
$$ P_{\epsilon, \mathbf{b}} = \frac{4}{\pi} \cdot \arcsin(\frac{\epsilon}{\|\mathbf{b}\|_2}).$$
\end{description}
\end{proof}

We are now ready to prove the main result of this section.
\begin{theorem}
  \label{th.main_uncontrol-n=2}
Assume that $n=2$ and that $\mathbf{A,b}$ belong to GOE and $\mathbf{S_b}$ respectively. For any positive number $\epsilon$, the $\epsilon$-uncontrollability of the random system \eqref{eq.random_system} is given by
  $$P_{\epsilon} = 1-e^{2\epsilon^2}+ \frac{4}{\sqrt{2}\pi}\int_{2\epsilon^2}^{\infty} \arcsin\left( \frac{\epsilon}{\sqrt{x}}\right) e^{-x/2} \, dx .$$
\end{theorem}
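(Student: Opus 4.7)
The plan is to obtain $P_\epsilon$ by integrating the conditional probability $P_{\epsilon, \mathbf{b}}$ from Theorem~\ref{th.uncontrol-n=2} against the distribution of $\mathbf{b}$. Because the ensembles GOE and $\mathbf{S}_b$ are, by construction, independent, the tower property gives $P_\epsilon = \mathbb{E}_{\mathbf{b}}\bigl[P_{\epsilon, \mathbf{b}}\bigr]$. Moreover, the expression for $P_{\epsilon,\mathbf{b}}$ depends on $\mathbf{b}$ only through the scalar $\|\mathbf{b}\|_2$, so it suffices to know the distribution of this norm.

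The next step is to identify this distribution. Since $b_1, b_2$ are i.i.d.\ $N(0,1)$, the variable $X := \|\mathbf{b}\|_2^2 = b_1^2 + b_2^2$ has the chi-squared distribution with two degrees of freedom, which is the exponential distribution with density $\tfrac{1}{2}e^{-x/2}$ on $(0,\infty)$. I would then translate the dichotomy of Theorem~\ref{th.uncontrol-n=2} into the variable $X$: the condition $\epsilon/\|\mathbf{b}\|_2 \geq \sqrt{2}/2$ is equivalent to $X \leq 2\epsilon^2$, in which case $P_{\epsilon,\mathbf{b}}=1$; otherwise $P_{\epsilon,\mathbf{b}} = \tfrac{4}{\pi}\arcsin(\epsilon/\sqrt{X})$.

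Splitting the expectation accordingly yields
\[
P_\epsilon \;=\; \int_0^{2\epsilon^2} \tfrac{1}{2}e^{-x/2}\,dx \;+\; \int_{2\epsilon^2}^{\infty} \tfrac{4}{\pi}\arcsin\!\bigl(\epsilon/\sqrt{x}\bigr)\cdot \tfrac{1}{2}e^{-x/2}\,dx .
\]
The first integral is elementary and evaluates to $1 - e^{-\epsilon^2}$, while the second is left as the stated improper integral (after absorbing the factor $\tfrac{1}{2}$ into the constant). Adding the two contributions produces the formula of the theorem.

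There is no real obstacle here: the result is essentially a direct corollary of Theorem~\ref{th.uncontrol-n=2} combined with the density of a $\chi^2_2$ random variable. The only point requiring some care is matching the threshold $\epsilon/\|\mathbf{b}\|_2 = \sqrt{2}/2$ from the conditional formula with the integration endpoint $x = 2\epsilon^2$ via the change of variable $x = \|\mathbf{b}\|_2^2$.
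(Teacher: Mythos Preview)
Your proposal is correct and follows exactly the paper's approach: condition on $\mathbf{b}$ using independence, observe that $P_{\epsilon,\mathbf{b}}$ depends only on $\|\mathbf{b}\|_2$, use that $\|\mathbf{b}\|_2^2$ is $\chi^2_2$ (equivalently, pass to polar coordinates), and split the resulting one-dimensional integral at the threshold $x=2\epsilon^2$. Your computed constants $1-e^{-\epsilon^2}$ and $\tfrac{2}{\pi}$ are the correct ones and in fact agree with what the paper's own proof obtains; the expressions $1-e^{2\epsilon^2}$ and $\tfrac{4}{\sqrt{2}\pi}$ in the theorem statement are evidently typographical slips.
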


\begin{proof}
Let $\{\mathbf{v}_1, \mathbf{v}_2\}$ be an orthonormal basis of $\mathbb{R}^2$ consisting of eigenvectors of $\mathbf{A}$. We set
$$Z= \min \{\abs{\sca{\mathbf{v}_1,\mathbf{b}}}, \abs{\sca{\mathbf{v}_2,\mathbf{b}}} \}.$$
Then, $Z$ is a non-negative random variable, which follows from the coordinates of the random vectors $\mathbf{v}_1, \mathbf{v}_2, \mathbf{b}$ after multiplication, summation and absolute values. It is not hard to see that
$$P_\epsilon = \mathbb{P} (Z<\epsilon) = \mathbb{E} [Z\cdot \mathbf{1}_{[0,\epsilon]}],$$
where $ \mathbf{1}_{[0,\epsilon]}$ is the characteristic (or indicator) function of the interval.

Recall (from Section \ref{sec.Gaussian}) our assumption that the entries of $\mathbf{A}$ and $\mathbf{b}$ are independent. It follows that $\mathbf{v}_1, \mathbf{b}$ are independent random vectors and clearly this is also true for the pair $\mathbf{v}_2, \mathbf{b}$. Using conditional expectation, we obtain that
$$P_\epsilon = \mathbb{E} [Z\cdot \mathbf{1}_{[0,\epsilon]}] = \int_{\mathbb{R}^2} \mathbb{E} [Z\cdot \mathbf{1}_{[0,\epsilon]} | \mathbf{b}] \cdot f(\mathbf{b}) \, d\mathbf{b} = \int_{\mathbb{R}^2} P_{\epsilon, b} \cdot f(\mathbf{b}) \, d\mathbf{b},$$
where $f(\mathbf{b})$ is the probability density function of the random vector $\mathbf{b}$. Since the coordinates $\mathbf{b}_1,\mathbf{b}_2$ of $\mathbf{b}$ are independent Gaussian random variables with zero mean and variance equal to $1$, it follows that
$$
  P_\epsilon =    \int_{\mathbb{R}^2} P_{\epsilon, \mathbf{b}} \cdot  \frac{1}{2\pi} \cdot  \exp\left(- \frac{\mathbf{b}_1^2+\mathbf{b}_2^2}{2}\right) \, d\mathbf{b}_1d\mathbf{b}_2 =    \int_{\mathbb{R}^2} P_{\epsilon, b} \cdot  \frac{1}{2\pi} \cdot  \exp\left(- \frac{\|\mathbf{b}\|_2^2}{2}\right) \, d\mathbf{b}_1d\mathbf{b}_2.
$$
We observe now that $P_{\epsilon,\mathbf{b}}$ depends only on $\|\mathbf{b}\|_2=\sqrt{\mathbf{b}_1^2+\mathbf{b}_2^2}$. Hence, by changing in polar coordinates (or, equivalently using the fact that $\mathbf{b}_1^2+\mathbf{b}_2^2$ has the $\chi^2$-distribution with $2$ degrees of freedom), we get that
\begin{align*}
  P_\epsilon =& \int_{0}^{2\pi}\int_{0}^{\infty} P_{\epsilon, \mathbf{b}}(r) \frac{1}{2\pi} e^{-r^2/2} r \, drd\theta = \int_{0}^{\infty} P_{\epsilon, \mathbf{b}}(r) e^{-r^2/2} r \, dr =\int_{0}^{\infty} P_{\epsilon, \mathbf{b}}(\sqrt{r}) \frac{1}{2} e^{-r/2}  \, dr \\
  = & \int_{0}^{2\epsilon^2} \frac{1}{2} e^{-r/2}  \, dr + \int_{2\epsilon^2}^{\infty}\frac{4}{\pi} \cdot \arcsin(\frac{\epsilon}{\sqrt{r}}) \frac{1}{2} e^{-r/2}  \, dr\\
 = & 1-e^{2\epsilon^2} + \frac{2}{\pi}\int_{2\epsilon^2}^{\infty} \arcsin(\frac{\epsilon}{\sqrt{r}}) \cdot e^{-r/2}  \, dr
\end{align*}
and we have proved the desired result.
\end{proof}

\section{The general case}\label{sec.general}
In this section, we consider the more general case where the state space of the random system \eqref{eq.random_system} is $\mathbb{R}^n$, $n\ge3$. This case is more complicated and we only give an upper bound for the $\epsilon$-uncontrollability of the system.

Firstly, we need some estimates from the elementary convex geometry. Assume that $\mathbf{v}\in \mathbf{S}^{n-1}$ is a unit vector and $\epsilon\in[0,1)$. The $\epsilon$-spherical cap about $\mathbf{v}$ is the following subset of $\mathbf{S}^{n-1}$:
$$C(\epsilon, \mathbf{v}) =\left\{\theta \in \mathbf{S}^{n-1}  \,\, :\,\, \sca{\theta, \mathbf{v}} \ge \epsilon\right\}.$$
Observe that the number $\epsilon$ does not refer to the radius of the cap. An easy calculation shows that the radius is $r=2(1-\epsilon)$. In general, the cap of radius $r$ about $\mathbf{v}$ is:
$$ \left\{\theta \in \mathbf{S}^{n-1} \,\, :\,\, \abs{\theta-\mathbf{v}} \le r\right\}.$$
Let $A_n$ denote the surface area of the unit ball $S^{n-1}$, i.e. $A_n=\frac{2\pi^{n/2}}{\Gamma(n/2)}$. Convex geometry provides the following upper and lower bounds for the surface area of a spherical cap (see, for example, \cite{ball}).

\begin{lemma}\label{lemma.estimate-1}
  For $0 \le \epsilon <1$, the cap $C(\epsilon, v) $ on $S^{n-1}$ has surface area at most $e^{-n\epsilon^2/2}\cdot A_n$.
\end{lemma}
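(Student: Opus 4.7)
The plan is to prove the bound by an orthogonal projection argument. I would define $\pi : S^{n-1} \to \mathbf{v}^\perp$ by $\pi(\theta) = \theta - \sca{\theta,\mathbf{v}}\mathbf{v}$. Since the cap $C(\epsilon, \mathbf{v})$ lies in the open hemisphere about $\mathbf{v}$, the restriction of $\pi$ to the cap is injective, and its image is contained in the $(n-1)$-dimensional Euclidean ball $B_{n-1}(0, \sqrt{1-\epsilon^2})$: indeed, for $\theta \in C(\epsilon,\mathbf{v})$ one has $\abs{\pi(\theta)}^2 = 1 - \sca{\theta, \mathbf{v}}^2 \le 1 - \epsilon^2$.

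A standard computation shows that the Jacobian of $\pi$ on the tangent space $T_\theta S^{n-1}$ equals $\sca{\theta,\mathbf{v}}$, because $T_\theta S^{n-1}$ (with normal $\theta$) and $\mathbf{v}^\perp$ (with normal $\mathbf{v}$) meet at dihedral angle $\arccos\sca{\theta,\mathbf{v}}$. The area formula together with the pointwise lower bound $\sca{\theta,\mathbf{v}} \ge \epsilon$ on the cap then yield
$$\epsilon \cdot \mathrm{area}\bigl(C(\epsilon, \mathbf{v})\bigr) \le \int_{C(\epsilon, \mathbf{v})} \sca{\theta,\mathbf{v}} \, d\sigma_{n-1}(\theta) = \mathrm{Vol}_{n-1}\bigl(\pi(C(\epsilon, \mathbf{v}))\bigr) \le (1-\epsilon^2)^{(n-1)/2} V_{n-1},$$
where $V_{n-1} = \pi^{(n-1)/2}/\Gamma((n+1)/2)$ denotes the volume of the unit $(n-1)$-ball. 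Combined with the elementary inequality $(1-\epsilon^2)^{(n-1)/2} \le e^{-(n-1)\epsilon^2/2}$, this produces an explicit upper bound on $\mathrm{area}(C(\epsilon,\mathbf{v}))$.

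To match the form $e^{-n\epsilon^2/2}\cdot A_n$ stated in the lemma, I would use the identity $V_{n-1}/A_n = \Gamma(n/2)/\bigl(2\sqrt\pi\,\Gamma((n+1)/2)\bigr)$ together with a Wallis-type bound $\Gamma(n/2)/\Gamma((n+1)/2) \le \sqrt{2/(n-1)}$ (easily verified for $n \ge 2$), to conclude $V_{n-1}/A_n \le 1/\sqrt{2\pi(n-1)}$. This yields the normalised inequality
$$\mathrm{area}\bigl(C(\epsilon,\mathbf{v})\bigr)\big/A_n \le \frac{e^{-(n-1)\epsilon^2/2}}{\epsilon\sqrt{2\pi(n-1)}}.$$

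The main obstacle is then closing the gap between the exponent $n-1$ produced by the projection and the target exponent $n$, while simultaneously handling the blow-up of the $1/\epsilon$ prefactor for small $\epsilon$. I would finish with a short case split: when $\epsilon \ge 1/\sqrt{n}$ the elementary inequality $e^{\epsilon^2/2} \le \epsilon\sqrt{2\pi(n-1)}$ converts the previous estimate into $e^{-n\epsilon^2/2}\cdot A_n$; when $\epsilon < 1/\sqrt{n}$ the target quantity $e^{-n\epsilon^2/2}$ exceeds $e^{-1/2} > 1/2$, while the cap sits inside the open hemisphere, so $\mathrm{area}\bigl(C(\epsilon,\mathbf{v})\bigr) \le A_n/2$ and the desired bound holds automatically.
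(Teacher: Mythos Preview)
The paper does not prove this lemma: it is stated without argument, with a citation to Ball's lecture notes. So there is no proof in the paper to compare against; you are supplying what the authors chose to omit.

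Your argument is correct. The projection identity with Jacobian $\sca{\theta,\mathbf v}$ and the Wendel--Gautschi bound $\Gamma(n/2)/\Gamma((n+1)/2)\le\sqrt{2/(n-1)}$ do give the normalised estimate
\[
\frac{\mathrm{area}\bigl(C(\epsilon,\mathbf v)\bigr)}{A_n}\ \le\ \frac{e^{-(n-1)\epsilon^2/2}}{\epsilon\sqrt{2\pi(n-1)}}\,.
\]
In the first branch of your case split you call the inequality $e^{\epsilon^2/2}\le \epsilon\sqrt{2\pi(n-1)}$ (for $\epsilon\ge 1/\sqrt n$) ``elementary''; it is true but merits one line of justification: since $\epsilon\mapsto e^{\epsilon^2/2}/\epsilon$ is decreasing on $(0,1]$ (its logarithmic derivative is $\epsilon-1/\epsilon<0$), it suffices to check the endpoint $\epsilon=1/\sqrt n$, where the claim reduces to $e^{1/n}\le 2\pi(1-1/n)$, valid for every $n\ge 2$. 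The second branch is fine; note only that at $\epsilon=0$ the cap is the \emph{closed} hemisphere rather than open, but the bound $A_n/2\le A_n$ is of course still immediate.

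For context, the argument in the cited reference is volumetric rather than via surface projection: one identifies the normalised cap measure with the fraction of the solid ball lying in the cone over the cap and bounds that region by a ball of radius $\sqrt{1-\epsilon^2}$, reaching the exponent $n$ directly without the $1/\epsilon$ prefactor or the case split. Your route is a legitimate alternative, just slightly longer.
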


\begin{lemma}\label{lemma.estimate-2}
  For $0\le r\le 2$, a cap of radius $r$ on $S^{n-1}$ has surface area at least $\frac{1}{2} \cdot \left(\frac{r}{2}\right)^{n-1}\cdot A_n$.
\end{lemma}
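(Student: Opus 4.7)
The plan is a two-regime argument, splitting at $r = \sqrt{2}$, which is the Euclidean diameter of a hemisphere on $S^{n-1}$.

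When $r \in [\sqrt{2}, 2]$, I would observe that the cap of radius $r$ about $\mathbf{v}$ contains the entire closed hemisphere centered at $\mathbf{v}$: the farthest point of that hemisphere from $\mathbf{v}$ lies on its bounding equator, at Euclidean distance exactly $\sqrt{2}$. The cap's surface area is therefore at least $A_n/2$, which dominates $\tfrac{1}{2}(r/2)^{n-1}A_n$ using $r \le 2$, settling this regime at once.

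For the thin-cap regime $r \in [0, \sqrt{2}]$, after rotating so $\mathbf{v} = e_n$, I would parametrize the cap as a graph $x_n = \sqrt{1 - |y|^2}$ over a Euclidean disk $D \subset \mathbb{R}^{n-1}$ of radius $\rho = r\sqrt{1 - r^2/4}$. The induced surface-area element is $dy/\sqrt{1 - |y|^2} \ge dy$, so the cap's area is bounded below by the flat volume $\omega_{n-1}\rho^{n-1}$, where $\omega_{n-1}$ denotes the volume of the unit $(n-1)$-ball. Since $1 - r^2/4 \ge 1/2$ throughout this regime, I obtain surface area at least $\omega_{n-1}(r/\sqrt{2})^{n-1}$, and the lemma reduces to the purely dimensional inequality $2^{(n+1)/2}\,\omega_{n-1} \ge A_n$. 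Substituting the standard formulas $\omega_{n-1} = \pi^{(n-1)/2}/\Gamma((n+1)/2)$ and $A_n = 2\pi^{n/2}/\Gamma(n/2)$, this becomes a Gamma-function estimate that holds with equality at $n = 3$ and can be verified for $n \ge 4$ via log-convexity of $\Gamma$.

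The main obstacle is that the dimensional inequality actually fails at $n = 2$ (where $2^{3/2}\omega_1 = 4\sqrt{2} < 2\pi = A_2$), so the projection step is too lossy at the lowest dimension and must be bypassed. For that base case I would instead invoke the exact formula $\text{length}(\text{arc}) = 4\arcsin(r/2)$, combined with $\arcsin(r/2) \ge r/2$ and $4 \ge \pi$, to obtain the bound directly. Beyond this low-dimensional delicacy, the reduction leaves no slack at $n = 3$, so each intermediate estimate must be used essentially sharply; any weakening of the log-convexity step would therefore need an alternative handling of the $n = 3$ case.
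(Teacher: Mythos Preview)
Your argument is correct, but there is nothing in the paper to compare it against: the paper states Lemma~\ref{lemma.estimate-2} (and Lemma~\ref{lemma.estimate-1}) without proof, merely citing Ball's lecture notes~\cite{ball}. So your proposal is not a reconstruction of an omitted argument in this paper but a free-standing proof of a quoted fact.

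As such a proof, your outline is sound. The split at $r=\sqrt{2}$ is natural; in the large-$r$ regime the hemisphere inclusion is immediate and the bound follows from $(r/2)^{n-1}\le 1$. In the small-$r$ regime the graph parametrization over the disk of radius $\rho=r\sqrt{1-r^2/4}$ with area element $dy/\sqrt{1-|y|^2}\ge dy$ is correct, and the reduction to $2^{(n+1)/2}\omega_{n-1}\ge A_n$ behaves exactly as you say: equality at $n=3$, failure at $n=2$, and validity for $n\ge 4$ via $\Gamma(x+\tfrac12)\le\sqrt{x}\,\Gamma(x)$ together with $2^n\ge\pi n$. Your separate treatment of $n=2$ via $4\arcsin(r/2)\ge 2r\ge \tfrac{\pi r}{2}$ is also correct. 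The only remark is that the reference you are replacing handles all $n\ge 2$ and all $r\in[0,2]$ uniformly by a volumetric comparison (relating the normalized cap measure to the volume of the sector it subtends in the unit ball), so your route, while entirely valid, trades that uniformity for an explicit case analysis that is tight at $n=3$ and requires the $n=2$ patch.
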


Following the lines of Theorem \ref{th.uncontrol-n=2}, we now prove the next result. Assume that we have a random system of the form \eqref{eq.random_system}, where the matrix $\mathbf{A}$ belongs to GOE($n$).

\begin{theorem}\label{th.general-n}
  Let $\mathbf{A}$ be in the GOE($n$) and let $\mathbf{b}\in \mathbb{R}^n$ be any non zero vector. Then, for the random system \eqref{eq.random_system}, we have the estimate:
  $$P_{\epsilon, \mathbf{b}} = \mathbb{P}\left( \abs{\sca{\mathbf{v,b}}}<\epsilon \,\, : \,\, \text{for some eigenvector } \mathbf{v} \text{ of } \mathbf{A}, \|\mathbf{v}\|_2=1 \right) \le n \Big(1 - \left(1-\frac{\epsilon}{\|\mathbf{b}\|_2}\right)^{n-1} \Big) .$$
\end{theorem}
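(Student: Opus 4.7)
My plan is to handle the theorem in three steps: (i) a union bound over the $n$ eigenvectors of $\mathbf{A}$, (ii) reduction to a single-coordinate event via rotational invariance of the uniform measure on $\mathbf{S}^{n-1}$, and (iii) a spherical cap area estimate using Lemma \ref{lemma.estimate-2}.

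Almost surely $\mathbf{A}$ admits an orthonormal eigenbasis $\mathbf{v}_1,\ldots,\mathbf{v}_n$, so the union bound gives
\[
P_{\epsilon,\mathbf{b}} \;\le\; \sum_{i=1}^n \mathbb{P}\bigl(\apol{\sca{\mathbf{v}_i,\mathbf{b}}}<\epsilon\bigr).
\]
By Proposition \ref{prop-distri-eigenvector} each $\mathbf{v}_i$ is uniform on $\mathbf{S}^{n-1}_+$; since $\apol{\sca{\mathbf{v}_i,\mathbf{b}}}$ is invariant under the sign flip $\mathbf{v}_i\mapsto -\mathbf{v}_i$, its distribution is unchanged when $\mathbf{v}_i$ is regarded as uniform on the full sphere $\mathbf{S}^{n-1}$. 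Rotation invariance then permits the assumption $\mathbf{b}=\|\mathbf{b}\|_2\mathbf{e}_1$, so each summand reduces to $\mathbb{P}(\apol{V_1}<\delta)$, with $\delta:=\epsilon/\|\mathbf{b}\|_2$ and $V=(V_1,\ldots,V_n)$ uniform on $\mathbf{S}^{n-1}$. Hence $P_{\epsilon,\mathbf{b}} \le n\,\mathbb{P}(\apol{V_1}<\delta)$.

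The heart of the proof is the one-coordinate inequality $\mathbb{P}(\apol{V_1}\ge\delta)\ge(1-\delta)^{n-1}$. The complementary event is the disjoint union of the two antipodal $\delta$-caps $C^{\pm}=\{v\in\mathbf{S}^{n-1}:\pm v_1\ge\delta\}$, each of which coincides with a Euclidean spherical cap about $\pm\mathbf{e}_1$ (as noted immediately before Lemma \ref{lemma.estimate-1}). The natural choice is to inscribe inside $C^{\pm}$ the Euclidean ball of radius $r=2(1-\delta)$; a direct calculation ($\,v_1\ge 1-r^2/2\ge\delta\,$) shows that this ball is contained in $C^{\pm}$ exactly when $\delta\ge 1/2$, in which case Lemma \ref{lemma.estimate-2} yields a surface area of at least $\tfrac{1}{2}(1-\delta)^{n-1}A_n$ per cap; summing the two antipodal contributions cancels the $\tfrac{1}{2}$ and recovers exactly the desired $(1-\delta)^{n-1}$.

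The main obstacle is the complementary regime $\delta<1/2$: there the ball of radius $2(1-\delta)$ is too large to fit inside $C^{\pm}$, and Lemma \ref{lemma.estimate-2} applied to the largest admissible ball of radius $\sqrt{2(1-\delta)}$ produces only the weaker bound $((1-\delta)/2)^{(n-1)/2}$. To close this gap I would combine the trivial bound $P_{\epsilon,\mathbf{b}}\le 1$ (which already matches the target $n(1-(1-\delta)^{n-1})$ as soon as $\delta$ exceeds the small explicit threshold at which $n(1-(1-\delta)^{n-1})\ge 1$) with a direct integral estimate of the marginal density $c_n(1-t^2)^{(n-3)/2}$ of $V_1$ on $[-1,1]$ for the residual small-$\delta$ range, using concavity/Bernoulli-type inequalities to compare $\int_0^{\delta}c_n(1-t^2)^{(n-3)/2}\,dt$ with $1-(1-\delta)^{n-1}$. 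Once the one-coordinate estimate is secured in all regimes, substituting it into the union bound of Step~1 yields the stated inequality.
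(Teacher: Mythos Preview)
Your three-step plan---union bound over the $n$ eigenvectors, rotation to reduce $\abs{\sca{\mathbf{v}_i,\mathbf{b}}}$ to the first coordinate of a uniform point on $\mathbf{S}^{n-1}$, and then the cap lower bound of Lemma~\ref{lemma.estimate-2}---is exactly the paper's strategy. Steps (i) and (ii) match the paper line for line (the paper uses an explicit $T\in O(n)$ sending $\mathbf{b}/\|\mathbf{b}\|_2$ to $\mathbf{e}_1$ rather than your sign-flip-plus-rotation-invariance phrasing, but the content is identical).

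Where you diverge is step (iii), and here you have actually caught an error in the paper's own argument. The paper simply asserts that the cap $\{\theta\in\mathbf{S}^{n-1}:\theta_1\ge\delta\}$ has Euclidean radius $r=2(1-\delta)$ and feeds this into Lemma~\ref{lemma.estimate-2} to obtain area $\ge\tfrac{1}{2}(1-\delta)^{n-1}A_n$. But, as you correctly compute, $|\theta-\mathbf{e}_1|\le r$ is equivalent to $\theta_1\ge 1-r^2/2$, so the cap's true radius is $\sqrt{2(1-\delta)}$, not $2(1-\delta)$; the paper's substitution is unjustified for $\delta<1/2$. Your diagnosis that the ball of radius $2(1-\delta)$ sits inside the cap only when $\delta\ge 1/2$ is precisely the gap.

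Your proposed repair for the regime $\delta<1/2$---the trivial bound $P_{\epsilon,\mathbf{b}}\le 1$ once $n(1-(1-\delta)^{n-1})\ge 1$, together with a direct estimate on the marginal density $c_n(1-t^2)^{(n-3)/2}$ for the residual small-$\delta$ range---is a sound plan, but you have only sketched it. To close the argument you would need to actually verify $2c_n\int_0^{\delta}(1-t^2)^{(n-3)/2}\,dt\le 1-(1-\delta)^{n-1}$ on that range (it holds, e.g.\ since $2c_n=2\Gamma(n/2)/(\sqrt{\pi}\,\Gamma((n-1)/2))\le n-1$ controls the linear term and a convexity comparison finishes it), so that part remains to be written out.
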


\begin{proof}
  Let $\{\mathbf{v}_i\}_{i=1}^n$ be an orthonormal basis of $\mathbb{R}^n$ consisting of eigenvectors of the matrix $\mathbf{A}$. Without loss of generality (replacing $\mathbf{v}_i$ with $-\mathbf{v}_i$ if necessary), we may assume that the first non zero coordinate of each $\mathbf{v}_i$ is positive. We now obtain:
  $$P_{\epsilon, \mathbf{b}} = \mathbb{P}\left( \abs{\sca{\mathbf{v}_i,\mathbf{b}}}<\epsilon \,\, : \,\, \text{for some } i=1,2,\ldots,n \right).$$
  We write $\mathbf{b}=\|\mathbf{b}\|_2 \left( \frac{\mathbf{b}_1}{\|\mathbf{b}\|_2} , \frac{\mathbf{b}_2}{\|\mathbf{b}\|_2}, \ldots , \frac{\mathbf{b}_n}{\|\mathbf{b}\|_2} \right)$ and we consider an orthogonal transformation $T\in O(n)$ that assigns $\left( \frac{\mathbf{b}_1}{\|\mathbf{b}\|_2} , \frac{\mathbf{b}_2}{\|\mathbf{b}\|_2}, \ldots , \frac{\mathbf{b}_n}{\|\mathbf{b}\|_2} \right)$ to the vector $\mathbf{e}_1=(1,0,\ldots,0)$. Since $T$ is orthogonal, it follows that:
  $$\abs{\sca{\mathbf{b,v}}} = \abs{\sca{T(\mathbf{b}),T(\mathbf{v})}} = \|\mathbf{b}\|_2 \cdot \abs{\sca{\mathbf{e}_1,T(\mathbf{v})}}$$
  for any eigenvector $\mathbf{v}=v_i$, $i=1,2, \ldots, n$ of $A$. Hence,
  \begin{align*}
    & \mathbb{P}\left( \|\mathbf{b}\|_2 \abs{\sca{T(\mathbf{v}_i),\mathbf{e}_1}}<\epsilon \,\, : \,\, \text{for some } i=1,2,\ldots,n \right) \\
    =  & \mathbb{P}\left(  \abs{\sca{T(\mathbf{v}_i),\mathbf{e}_1}}<\frac{\epsilon}{\|\mathbf{b}\|_2} \,\, : \,\, \text{for some } i=1,2,\ldots,n \right).
  \end{align*}
Note that in $\mathbb{R}^n$ for $n\ge 3$, the sets $( \abs{\sca{T(\mathbf{v}_i),\mathbf{e}_1}}<\frac{\epsilon}{\|\mathbf{b}\|_2})$, $i=1,2,\ldots,n$ are not pairwise disjoint, even for small values of $\epsilon$. Therefore, we cannot repeat the argumentation of the case $n=2$. However, we may proceed as follows
$$
  P_{\epsilon, \mathbf{b}} \le  \sum_{i=1}^{n} \mathbb{P}\left(  \abs{\sca{T(\mathbf{v}_i),\mathbf{e}_1}}<\frac{\epsilon}{\|\mathbf{b}\|_2} \right).
$$
Since $\mathbf{v}_i$ is uniformly distributed in $S^{n-1}_+$ (see Proposition \ref{prop-distri-eigenvector}), we have that $T(\mathbf{v}_i)$ is uniformly distributed to some hemisphere. Therefore, if $\mathbf{A}$ denotes the surface area measure in the sphere $S^{n-1}$, then,
  \begin{align*}
    \mathbb{P}\left( \abs{\sca{T(\mathbf{v}_i),\mathbf{e}_1}}<\frac{\epsilon}{\|\mathbf{b}\|_2} \right) \le & \frac{\mathbf{A}(\theta\in S^{n-1} \,\, : \,\, 0<\theta_1<\frac{\epsilon}{\|\mathbf{b}\|_2})}{A_n/2} \\
    = & \frac{A_n/2 - \mathbf{A}(\theta\in S^{n-1} \,\, : \,\, \frac{\epsilon}{\|\mathbf{b}\|_2} \le\theta_1)}{A_n/2}.
  \end{align*}
The set $\{\theta\in S^{n-1} \,\, : \,\, \frac{\epsilon}{\|\mathbf{b}\|_2} \le\theta_1\}$ is a spherical cap of radius $r=2\left(1-\frac{\epsilon}{\|\mathbf{b}\|_2} \right)$. Hence, by Lemma \ref{lemma.estimate-2}, its surface area is at least $\frac{1}{2}\left(\frac{r}{2}\right)^{n-1}A_n$. Therefore,
$$P_{\epsilon, \mathbf{b}} \le \frac{A_n/2 - \frac{1}{2} \left(\frac{r}{2}\right)^{n-1} A_n}{A_n/2} =1 - \left(\frac{r}{2}\right)^{n-1}  = 1 - \left(1-\frac{\epsilon}{\|\mathbf{b}\|_2}\right)^{n-1}.$$
Hence,
  $$P_{\epsilon, \mathbf{b}} \le \sum_{i=1}^{n} \Big(1 - \left(1-\frac{\epsilon}{\|\mathbf{b}\|_2}\right)^{n-1} \Big) = n \Big(1 - \left(1-\frac{\epsilon}{\|\mathbf{b}\|_2}\right)^{n-1} \Big).$$

\end{proof}

\begin{theorem}
  Assume that $\mathbf{A,b}$ belong to GOE($n$) and $S_\mathbf{b}$ respectively and let $\epsilon$ be any positive number. For the $\epsilon$-uncontrollability of the random system \eqref{eq.random_system}, the following inequality holds
  $$ P_\epsilon \le \frac{n}{2^{n/2}\Gamma(n/2)} \int_{0}^\infty \Big(1 - \left(1-\frac{\epsilon}{\sqrt{r}}\right)^{n-1} \Big) \cdot e^{-r/2} \cdot r^{(n/2)-1} \, dr. $$
  \label{theorem5_4}
\end{theorem}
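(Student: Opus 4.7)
The plan is to mimic the structure of the proof of Theorem \ref{th.main_uncontrol-n=2}, replacing the exact formula for $P_{\epsilon,\mathbf{b}}$ by the upper bound furnished by Theorem \ref{th.general-n}, and then averaging over the distribution of $\mathbf{b}$. The independence assumption between the entries of $\mathbf{A}$ and $\mathbf{b}$ (stated in Section \ref{sec.Gaussian}) is the key input that allows such a conditioning argument.

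First I would write, via conditioning on $\mathbf{b}$,
\begin{equation*}
P_\epsilon \;=\; \mathbb{E}\bigl[\mathbb{P}(\exists\,\mathbf{v}\text{ eigenvector of }\mathbf{A},\ \|\mathbf{v}\|_2=1,\ \abs{\sca{\mathbf{v},\mathbf{b}}}<\epsilon \mid \mathbf{b})\bigr] \;=\; \int_{\mathbb{R}^n} P_{\epsilon,\mathbf{b}}\,f(\mathbf{b})\,d\mathbf{b},
\end{equation*}
where $f(\mathbf{b})=(2\pi)^{-n/2}\exp(-\|\mathbf{b}\|_2^2/2)$ is the joint density coming from the ensemble $\mathbf{S}_b$. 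Theorem \ref{th.general-n} gives the pointwise bound $P_{\epsilon,\mathbf{b}}\le n\bigl(1-(1-\epsilon/\|\mathbf{b}\|_2)^{n-1}\bigr)$, valid for $\mathbf{b}\neq 0$, and in any case $P_{\epsilon,\mathbf{b}}\le 1$ trivially. Plugging this in produces an upper bound for $P_\epsilon$ that depends on $\mathbf{b}$ only through $\|\mathbf{b}\|_2$.

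Next, since the integrand is rotationally symmetric in $\mathbf{b}$, I would pass to the radial variable. Concretely, $\|\mathbf{b}\|_2^2$ is a sum of $n$ independent squared standard Gaussians, hence follows the $\chi^2_n$ distribution with density
\begin{equation*}
g(r) \;=\; \frac{1}{2^{n/2}\Gamma(n/2)}\,r^{(n/2)-1}\,e^{-r/2},\qquad r>0.
\end{equation*}
Setting $r=\|\mathbf{b}\|_2^2$ and using the bound from Theorem \ref{th.general-n} inside the integral gives
\begin{equation*}
P_\epsilon \;\le\; \int_0^\infty n\Bigl(1-\bigl(1-\tfrac{\epsilon}{\sqrt{r}}\bigr)^{n-1}\Bigr)\,g(r)\,dr,
\end{equation*}
which is exactly the stated inequality after substituting for $g(r)$.

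The only real subtlety is handling the range $r<\epsilon^2$, where $1-\epsilon/\sqrt{r}$ is negative and the bound of Theorem \ref{th.general-n} exceeds $1$; there one should use the trivial bound $P_{\epsilon,\mathbf{b}}\le 1$. Because $n(1-(1-\epsilon/\sqrt{r})^{n-1})\ge 1$ throughout that range (one can check this elementarily, or simply use $\min$ of the two bounds), the integral in the theorem statement remains a valid upper bound. I expect this to be a routine verification rather than the main obstacle; the substantive content of the proof is really just the combination of conditional expectation, Theorem \ref{th.general-n}, and the chi-square density.
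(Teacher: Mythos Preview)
Your proposal is correct and follows essentially the same route as the paper: condition on $\mathbf{b}$, apply the pointwise bound of Theorem \ref{th.general-n}, and then integrate against the $\chi^2_n$ density of $\|\mathbf{b}\|_2^2$. You are in fact more careful than the paper in flagging the range $r<\epsilon^2$, which the paper simply ignores; note however that your claim $n\bigl(1-(1-\epsilon/\sqrt{r})^{n-1}\bigr)\ge 1$ on that range is false when $n-1$ is even and $r$ is small enough (since then $(1-\epsilon/\sqrt{r})^{n-1}\to+\infty$), so that remark would need a slightly different justification.
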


\begin{proof}
  As in the proof of Theorem \ref{th.main_uncontrol-n=2}, it follows that
$$P_\epsilon = \int_{\mathbb{R}^n} P_{\epsilon, \mathbf{b}} \cdot f(b) \, d\mathbf{b},$$
where $f$ is the probability density function of the random vector $\mathbf{b}$. Since the entries of $\mathbf{b}$ are independent Gaussian random variables with zero mean and variance equal to $1$, we have
$$P_\epsilon = \int_{\mathbb{R}^n} P_{\epsilon, \mathbf{b}} \cdot\frac{1}{\sqrt{(2\pi)^n}} \cdot \exp\left( - \frac{\|\mathbf{b}\|_2^2}{2} \right) \, d\mathbf{b}.$$
By Theorem \ref{th.general-n}, we obtain
$$P_\epsilon \le \int_{\mathbb{R}^n} n \Big(1 - \left(1-\frac{\epsilon}{\|\mathbf{b}\|_2}\right)^{n-1} \Big) \cdot\frac{1}{\sqrt{(2\pi)^n}} \cdot \exp\left( - \frac{\|\mathbf{b}\|_2^2}{2} \right) \, d\mathbf{b}.$$

We observe that, in the last integral, only the norm $\|\mathbf{b}\|_2$ of the vector $\mathbf{b}$ appears. Hence, using polar coordinates (see, for example, \cite{folland} Corollary 2.51), or equivalently, the fact that $\mathbf{b}_1^2+\ldots +\mathbf{b}_n^2$ has the $\chi^2$-distribution with $n$ degrees of freedom, we obtain
\begin{align*}
  P_\epsilon \le & \frac{2\sqrt{\pi^n}}{\Gamma(n/2)} \int_{0}^\infty n \Big(1 - \left(1-\frac{\epsilon}{r}\right)^{n-1} \Big) \cdot\frac{1}{\sqrt{(2\pi)^n}} \cdot \exp\left( - \frac{r^2}{2} \right) r^{n-1} \, dr \\
  = & \frac{n}{2^{n/2}\Gamma(n/2)} \int_{0}^\infty \Big(1 - \left(1-\frac{\epsilon}{\sqrt{r}}\right)^{n-1} \Big) \cdot e^{-r/2} \cdot r^{(n/2)-1} \, dr\\
  or \,=& \frac{n 2^{1-n/2}}{\Gamma(n/2)} \int_{0}^{\infty}\Big(1-\left(1-\frac{\epsilon}{r}\right)^{n-1}\Big)\cdot e^{-r^2/2}\cdot r^{n-1} \, dr,
\end{align*}
and the proof is complete.
\end{proof}

The next corollary shows that the growth of $P_\epsilon$ is at most polynomial of degree $n-1$ with respect to $\epsilon$.

\begin{corollary}\label{cor.main}
  For any integer $n\ge 2$ and any positive number $\epsilon$, we have
  $$P_\epsilon \le \sum_{k=1}^{n-1} (-1)^{k+1} \binom{n-1}{k} \frac{n\Gamma(\frac{n-k}{2})}{2^{k/2}\Gamma(\frac{n}{2})} \epsilon^k.$$
\end{corollary}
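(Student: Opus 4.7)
My plan is to take the integral upper bound from Theorem \ref{theorem5_4} and convert it into a finite polynomial in $\epsilon$ by expanding the integrand via the binomial theorem and then recognizing each resulting integral as a Gamma function.

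Since $n-1$ is a non-negative integer, I would first apply the binomial theorem to $(1-\epsilon/\sqrt{r})^{n-1}$, which is valid for every $r>0$ regardless of the sign of $1-\epsilon/\sqrt{r}$. After the constant term cancels the leading $1$, this yields
$$1-\left(1-\frac{\epsilon}{\sqrt{r}}\right)^{n-1}=\sum_{k=1}^{n-1}(-1)^{k+1}\binom{n-1}{k}\frac{\epsilon^k}{r^{k/2}}.$$

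I would then plug this into the bound of Theorem \ref{theorem5_4} and interchange the finite sum with the integral. This is legitimate because each term, multiplied by $e^{-r/2}r^{n/2-1}$, is absolutely integrable on $(0,\infty)$: the resulting exponent $(n-k)/2-1$ exceeds $-1$ precisely when $k\le n-1$. The substitution $u=r/2$ then evaluates each remaining integral as
$$\int_0^\infty r^{(n-k)/2-1}\,e^{-r/2}\,dr=2^{(n-k)/2}\,\Gamma\!\left(\frac{n-k}{2}\right).$$

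Finally, I would collect constants: the prefactor $n/(2^{n/2}\Gamma(n/2))$ combines with $2^{(n-k)/2}\Gamma((n-k)/2)$ inside the sum to produce $\frac{n\,\Gamma((n-k)/2)}{2^{k/2}\,\Gamma(n/2)}$, which is exactly the coefficient claimed in the corollary. There is no genuine obstacle here; the result is a routine consequence of Theorem \ref{theorem5_4} together with the evaluation of the moments of the chi-squared density. The only point worth double-checking is an edge effect: for $r<\epsilon^2$ the integrand on the left-hand side of Theorem \ref{theorem5_4} is not pointwise non-negative, but this is harmless because the binomial expansion is an \emph{equality} of real numbers, so the manipulations go through unchanged.
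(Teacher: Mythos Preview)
Your proposal is correct and follows essentially the same approach as the paper: binomial expansion of $(1-\epsilon/\sqrt r)^{n-1}$, insertion into the integral bound of Theorem~\ref{theorem5_4}, and evaluation of each term via the Gamma integral $\int_0^\infty e^{-r/2}r^{(n-k)/2-1}\,dr=2^{(n-k)/2}\Gamma((n-k)/2)$. Your version is in fact slightly more careful than the paper's, since you explicitly note the integrability condition $(n-k)/2-1>-1$ and the irrelevance of the sign of $1-\epsilon/\sqrt r$; the paper simply writes the computation without comment.
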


\begin{proof}
  Using the binomial expansion formula, we obtain
  $$1 - \left(1-\frac{\epsilon}{\sqrt{r}}\right)^{n-1} = \sum_{k=1}^{n-1} \binom{n-1}{k} (-1)^{k+1} \frac{\epsilon^k}{r^{k/2}}.$$
  By Theorem \ref{theorem5_4}, it follows that
  \begin{align*}
    P_\epsilon \le  & \sum_{k=1}^{n-1} (-1)^{k+1} \binom{n-1}{k} \frac{n}{2^{n/2}\Gamma(\frac{n}{2})} \cdot \epsilon^k \int_{0}^{\infty} e^{-r/2} r^{\frac{n-k}{2}-1} dr \\
    = & \sum_{k=1}^{n-1} (-1)^{k+1} \binom{n-1}{k} \frac{n\Gamma(\frac{n-k}{2})}{2^{k/2}\Gamma(\frac{n}{2})} \epsilon^k
  \end{align*}
  and we have the desired result.
\end{proof}

The next natural corollary is now straightforward.

\begin{corollary}
  For any integer $n\ge 2$, we have that $\lim_{\epsilon \to 0} P_{\epsilon} =0$.
\end{corollary}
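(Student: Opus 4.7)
The plan is to deduce the limit directly from the polynomial upper bound in Corollary \ref{cor.main}, using the fact that every term in that bound carries a positive power of $\epsilon$.

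First I would note that $P_\epsilon \ge 0$ by definition, since it is a probability. Next, I would invoke Corollary \ref{cor.main} to write
$$0 \le P_\epsilon \le \sum_{k=1}^{n-1} (-1)^{k+1} \binom{n-1}{k} \frac{n\Gamma(\frac{n-k}{2})}{2^{k/2}\Gamma(\frac{n}{2})} \epsilon^k.$$
The coefficient of $\epsilon^k$ in this sum is a fixed real constant, independent of $\epsilon$ (it depends only on $n$ and $k$, and each $\Gamma$-value is finite since $1 \le k \le n-1$ forces $(n-k)/2 \ge 1/2 > 0$). Thus the right-hand side is a polynomial in $\epsilon$ with no constant term.

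Letting $\epsilon \to 0^+$, every term $\epsilon^k$ with $k \ge 1$ tends to $0$, so the upper bound tends to $0$. By the squeeze theorem applied to $0 \le P_\epsilon \le (\text{polynomial in } \epsilon \text{ vanishing at } 0)$, we conclude $\lim_{\epsilon \to 0} P_\epsilon = 0$.

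There is no real obstacle here, since the nontrivial work was already carried out in Theorem \ref{theorem5_4} and Corollary \ref{cor.main}; the only minor point worth flagging is confirming that the Gamma factors are all finite, which is automatic from the range of $k$.
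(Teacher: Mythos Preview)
Your proof is correct and follows exactly the approach the paper intends: the paper simply declares the corollary ``straightforward'' from Corollary~\ref{cor.main}, and you have spelled out the obvious sandwich $0 \le P_\epsilon \le (\text{polynomial with no constant term})$ that justifies it.
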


Finally, we have the following estimate for the growth rate of $P_\epsilon$ at $0$.

\begin{corollary}
  Assume that $P_\epsilon$ is differentiable at $0$. Then,
  $$ \left. \frac{dP_\epsilon}{d\epsilon} \right|_{\epsilon=0} \le \frac{n(n-1)\Gamma(\frac{n-1}{2})}{\sqrt{2} \Gamma(\frac{n}{2})}.$$
\end{corollary}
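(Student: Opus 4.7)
The plan is to observe that this corollary is essentially an immediate consequence of the polynomial upper bound provided by Corollary \ref{cor.main}, combined with the fact (from the preceding corollary) that $P_0 = 0$. The only work is to take the correct limit and isolate the right coefficient.

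First I would note that, because $P_0 = 0$ and $P_\epsilon$ is assumed differentiable at $0$ (viewed as a one-sided derivative, since $P_\epsilon$ is defined only for $\epsilon \ge 0$), we have
$$\left.\frac{dP_\epsilon}{d\epsilon}\right|_{\epsilon=0} = \lim_{\epsilon \to 0^+}\frac{P_\epsilon - P_0}{\epsilon} = \lim_{\epsilon \to 0^+}\frac{P_\epsilon}{\epsilon}.$$
Next I would divide the estimate from Corollary \ref{cor.main} through by $\epsilon>0$, obtaining
$$\frac{P_\epsilon}{\epsilon} \le \sum_{k=1}^{n-1} (-1)^{k+1} \binom{n-1}{k} \frac{n\Gamma(\frac{n-k}{2})}{2^{k/2}\Gamma(\frac{n}{2})} \epsilon^{k-1}.$$
This is a polynomial in $\epsilon$, so the inequality passes to the limit as $\epsilon \to 0^+$, and every term with $k\ge 2$ vanishes. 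Only the $k=1$ summand survives, and substituting $k=1$ gives exactly the claimed bound
$$\binom{n-1}{1}\cdot\frac{n\,\Gamma(\tfrac{n-1}{2})}{2^{1/2}\,\Gamma(\tfrac{n}{2})} = \frac{n(n-1)\Gamma(\frac{n-1}{2})}{\sqrt{2}\,\Gamma(\frac{n}{2})}.$$

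There is essentially no obstacle here, since Corollary \ref{cor.main} has already done the real work of expanding the binomial under the integral. The only conceptual point worth flagging in the write-up is the hypothesis that $P_\epsilon$ is differentiable at $0$: without this assumption one would only be able to conclude the same bound for the upper Dini derivative $\limsup_{\epsilon \to 0^+} P_\epsilon/\epsilon$, which is why the statement explicitly assumes differentiability.
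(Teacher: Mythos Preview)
Your proof is correct and follows exactly the same approach as the paper, which simply states that the bound ``follows immediately by Corollary \ref{cor.main}.'' You have merely spelled out the details the paper leaves implicit: using $P_0=0$, dividing the polynomial bound by $\epsilon$, and letting $\epsilon\to 0^+$ so that only the $k=1$ term survives.
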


\begin{proof}
  It follows immediately by Corollary \ref{cor.main}.
\end{proof}

\section{Conclusions}
We defined a measure of $\epsilon$ uncontrollability in a Gaussian Random Ensemble of linear systems.  We calculated tight bounds for this probability in terms of $\epsilon$ and the number of states $n$. This is also depicted in the graphs included in the appendix (Figures \ref{fig:2}, \ref{fig:1}).

\appendix
\section{}

  \begin{figure}[H]
    \centering
 \includegraphics{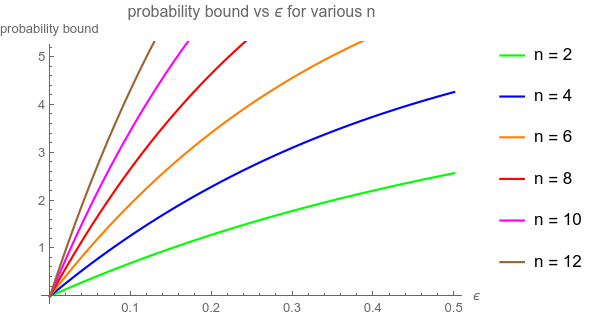}
    \caption{Moving from the lower to the upper value of $\epsilon$ the probability bound increases with n.}
    \label{fig:2}
  \end{figure}

  \begin{figure}[H]
    \centering
    \includegraphics{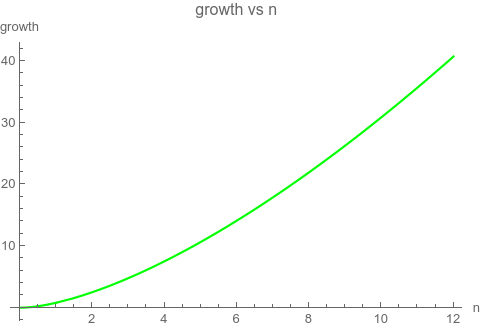}
    \caption{The growth of probability bound increases quadraticaly with n. }
    \label{fig:1}
  \end{figure}
  
  \begin{acknowledgment}
The authors want to express their thanks to professor D. Cheliotis for his valuable suggestions concerning random matrices.
\end{acknowledgment}

\bibliographystyle{amsplain}

\begin{thebibliography}{10}

\bibitem{ande} G. W. Anderson, A. Guionnet, O. Zeitouni, \emph{An introduction to random matrices}, Cambridge studies in Advance Mathematics {\bf 118}.

\bibitem{ball} K. Ball, \emph{An elementary introduction to modern convex geometry}, Flavors of Geometry, MSRI Publications, Volume {\bf 31}, 1997.

\bibitem{folland} G. Folland, \emph{Real Analysis: Modern techniques and their applications}, 2nd edition, Pure and Applied Mathematics, John Wiley \& Sons, 1999.

\bibitem{kark} Karkanias, N. Milonidis E., \emph{Structural methods for linear systems: An introduction}, in Mathematical Methods for Robust and Nonlinear Control, edited by M. C. Turner and D. G. Bates, Lecture notes in Control and Information Sciences {\bf 367}, Springer, pp. 47--98, 2007.

\bibitem{rourke} S. O'Rourke, B. Touri, \emph{Controllability of random systems: universality and minimal controllability}, arXiv (2015) 1506.03125.

\bibitem{polder} J. W. Polderman, J.C. Willems, \emph{Introduction to mathematical systems theory, A behavioral approach}, Texts in Applied Mathematics {\bf 26}, Springer.

\end{thebibliography}

\end{document}